\newtheorem{theo}{Theorem}
\newtheorem{theorem}{Theorem}
\newtheorem{lemma}[theorem]{Lemma}
\newtheorem{corollary}[theo]{Corollary}
\newenvironment{proof}{\par \noindent \textbf{Proof: }}{\QED \par \bigskip \par}
\newcommand{\QED}{\hfill$\square$}
\begin{document}

\baselineskip=0.30in

\vspace*{10mm}

\begin{center}
{\Large \bf \boldmath On the Vertex-Degree-Function Indices of Connected\\[2mm] $(n,m)$-Graphs of Maximum Degree at Most Four}

\vspace{8mm}

{\large \bf Abeer M. Albalahi$^{1}$,  Igor \v{Z}. Milovanovi\'c$^{2}$, Zahid Raza$^{3}$,\\ Akbar Ali$^{1,}\footnote{Corresponding author}$,
Amjad E. Hamza$^{1}$}

\vspace{6mm}

\baselineskip=0.23in

$^1${\it Department of Mathematics, College of Science,\\ University of Ha\!'il, Ha\!'il, Saudi Arabia}\\
{\tt a.albalahi@uoh.edu.sa, akbarali.maths@gmail.com, aboaljod2@hotmail.com} \\[3mm]
$^2${\it Faculty of Electronic Engineering,\\ University of Ni\v{s}, Ni\v{s}, Serbia}\\
{\tt igor.milovanovic@elfak.ni.ac.rs}\\[3mm]
$^3${\it Department of Mathematics, College of Sciences,\\ University of Sharjah, Sharjah, UAE}\\
{\tt zraza@sharjah.ac.ae}

\makeatletter

\def\@makefnmark{}

\makeatother


\vspace{10mm}

\baselineskip=0.23in

{\bf Abstract}

\end{center}
\noindent
Consider a graph $G$ and a real-valued function $f$ defined on the degree set of $G$. The sum of the outputs $f(d_v)$ over all vertices $v\in V(G)$ of $G$ is usually known as the vertex-degree-function indices and is denoted by $H_f(G)$, where $d_v$ represents the degree of a vertex $v$ of $G$. This paper gives sharp bounds on the index $H_f(G)$ in terms of order and size of $G$ when $G$ is connected and has the maximum degree at most $4$. All the graphs achieving the derived bounds are also determined by their degree sets. Bounds involving several existing indices -- including the general zeroth-order Randi\'c index and coindex,  the general multiplicative first/second Zagreb index, the variable sum lodeg index, the variable sum exdeg index -- are deduced as the special cases of the obtained ones.\\[2mm]
{\bf Keywords:} chemical graph theory; topological index; vertex-degree-function indices; degree of a vertex. \\[2mm]
{\bf AMS Subject Classification:} 05C07, 05C09, 05C92.

\baselineskip=0.40in

\section{Introduction}

This study is concerned with only connected and finite graphs. The (chemical-)graph theoretical concepts used in the this paper without providing their definitions can be found in the related books like \cite{Wagner-18,Trinajsti-92,Chartrand-16,Bondy08}.

A topological index is a function defined on the set of all graphs with the condition that it remains the same under the graph isomorphism.
The degree set of a graph $G$ is the set consisting of all distinct elements of the degree sequence of $G$. Consider a graph $G$ and a real-valued function $f$ defined on the degree set of $G$. The sum of the outputs $f(d_v)$ over all vertices $v\in V(G)$ of $G$ is usually known as the vertex-degree-function indices and is denoted by $H_f(G)$, where $d_v$ represents the degree of a vertex $v$ of $G$. Thus,
\begin{equation}\label{99juyhhh}
H_f(G) = \sum_{v\in V(G)} f(d_v).
\end{equation}
Although the terminology and notation of the topological index $H_f$ that is being used by several researchers was coined by Yao et al. \cite{MH2}, to the best of authors' knowledge such indices were studied first by Linial and Rozenman in \cite{Linial-02}.  These indices have been the subject of several recent papers; see for example the recent articles \cite{Tomescu-MATCH-22,Hu-MATCH-22-a,Tomescu-DAM-22}, recent review paper \cite{Li-DML-22}, and related publications cited therein.

If vertices $u$ and $v$ are adjacent in $G$, we write $u\sim v$, otherwise we write $u\nsim v$. Let $TI(G)$ be a vertex--degree--based topological index of the form:
$$
TI(G) =\sum_{u\sim v}(f(d_u)+f(d_v))= \sum_{u\in V(G)}d_uf(d_u)\,;
$$
the right-handed identity is a special case of a more general identity reported in \cite{c18}.
Then, the  corresponding coindex, $\overline{TI}(G)$ can be defined \cite{c19,c20} as:
$$
\overline{TI}(G)= \sum_{u\nsim v} (f(d_u)+f(d_v))= \sum_{u\in V(G)} (n-1-d_u)f(d_u)\,.
$$
The following identity is valid
\begin{equation}\label{l10a}
  TI(G)+\overline{TI}(G) = (n-1)\sum_{u\in V(G)}f(d_u)= (n-1)H_f(G)\,.
\end{equation}

In what follows, some existing indices are given that are special cases of Equation \eqref{99juyhhh}.
\begin{itemize}
  \item Equation \eqref{99juyhhh} gives the general zeroth-order Randi\'c index if $f(x)=x^\alpha$ (see for example \cite{Milo-20,Ali19,c21,c22,c23}), where $\alpha$ is a real number.

  \item The general zeroth--order Randi\' c coindex is obtained from equation \eqref{99juyhhh} corresponding to the choice $f(x)=(n-1-x)x^{\alpha -1}$, where $n$ is the order of the graph under consideration and $\alpha$ is a real number (see e.g. \cite{c24,c25}). Particularly, if $\alpha=3$, then the forgotten topological coindex $\overline{F}(G)=\sum_{u\in V(G)}(n-1-d_u)d_u^2$ is obtained (see for example \cite{c14, c26}); the forgotten topological coindex is same as the Lanzhou index \cite{Vuki-18}.

  \item One gets the natural logarithm of the general multiplicative first Zagreb index (general multiplicative second Zagreb index, respectively) \cite{Vetrik-18} by taking $f(x)=\ln  x^a$  ($f(x)=\ln  x^{ax}$, respectively), where $a\in \mathbb{R}$ (that is the set of real numbers).
  \item The substitution $f(x)= x(\ln x)^a$ in Equation \eqref{99juyhhh} yields the variable sum lodeg index \cite{Vuki-11}, where $a\in \mathbb{R}$
  \item If $f(x)=xa^x$ then Equation \eqref{99juyhhh} gives the variable sum exdeg index (see for example \cite{Vuki-11,Ali-DAM-18}), where $a>0$ with $a\ne1$.
  \end{itemize}

A graph with $n$ vertices and $m$ edges is known as an $(n,m)$-graph. A chemical graph is the one with the maximum degree at most four. This paper gives sharp bounds on the index $H_f(G)$ for chemical $(n,m)$-graphs in terms of $m$ and $n$. All the graphs achieving the derived bounds are also identified. Bounds involving the above-mentioned existing indices (that is, the general zeroth-order Randi\'c index and coindex,  the general multiplicative first/second Zagreb index, the variable sum lodeg index, the variable sum exdeg index) are deduced as the special cases of the obtained bounds.

\section{Main Results}

For a graph $G$, its number of vertices having the degree $r$ is denoted by $n_r$.
If $G$ is a chemical $(n,m)$-graph, then
\begin{equation}\label{MT-Eq-1}
H_f(G)=\sum_{i = 1}^4   n_{i}\, f(i),
\end{equation}
\begin{equation}\label{MT-Eq-2}
\sum_{i = 1}^4 n_i=n,
\end{equation}
\begin{equation}\label{MT-Eq-3}
\sum_{i = 1}^4 i\,n_i=2m,
\end{equation}
We solve Equations \eqref{MT-Eq-2} and \eqref{MT-Eq-3} for the quantities $n_{1},n_4$, and then substitute their values
in Equation (\ref{MT-Eq-1}):
\begin{align}\label{MT-Eq-5}
H_f(G)&=
\frac{1}{3}\Big(4f(1) - f(4)\Big)n + \frac{2}{3}\Big(f(4) - f(1)\Big)m\nonumber\\[4mm]
&\quad \ + \left(f(2) -\frac{2}{3}f(1) -\frac{1}{3}f(4)\right) n_2 + \left(f(3) -\frac{1}{3}f(1) -\frac{2}{3}f(4)\right) n_3 \,.
\end{align}
Let us take
\begin{align}\label{MT-Eq-6}
\Gamma_{\!\! f}(G)&= \left(f(2) -\frac{2}{3}f(1) -\frac{1}{3}f(4)\right) n_2 + \left(f(3) -\frac{1}{3}f(1) -\frac{2}{3}f(4)\right) n_3\,.
\end{align}
Now, Equation (\ref{MT-Eq-5}) yields
\begin{equation}\label{MT-Eq-7}
H_f(G)= \frac{1}{3}\Big(4f(1) - f(4)\Big)n + \frac{2}{3}\Big(f(4) - f(1)\Big)m
+\Gamma_{\!\! f}(G)\,.
\end{equation}
Let
\begin{equation}\label{MT-Eq-rev-1}
\xi_{1}=f(2)-\frac{2}{3} f(1)-\frac{1}{3} f(4) \quad \text{and} \quad \xi_{2}=f(3)-\frac{1}{3} f(1)-\frac{2}{3} f(4)
\end{equation}
be the coefficients of $n_{2}$ and $n_{3}$, respectively, in \eqref{MT-Eq-6}. From Equation \eqref{MT-Eq-7}, it is evident that if one wants to establish a bound on $H_f$ for chemical $(n,m)$-graphs in terms of $m$ and $n$, it is enough to determine the least or greatest $\Gamma_{\!\! f}$-value for chemical $(n,m)$-graphs. Thence, in the next lemma, we derive a bound on $\Gamma_{\!\! f}$ for chemical $(n,m)$-graphs.
\begin{lemma}\label{MT-Lem-1}
Let $G$ be a chemical $(n,m)$-graph such that $n_2+n_3\ge2$.
\begin{description}
  \item[(i).] If both $\xi_1$ and $\xi_2$ are negative such that $2\xi_2 < \xi_1 < \xi_2/2$, then
$$
\Gamma_{\!\! f}(G) < \min\left\{\xi_1,\xi_2\right\}.
$$
  \item[(ii).] If both $\xi_1$ and $\xi_2$ are positive such that $\xi_2/2 < \xi_1 < 2\xi_2$, then
$$
\Gamma_{\!\! f}(G) > \max\left\{\xi_1,\xi_2\right\}.
$$
\end{description}

\end{lemma}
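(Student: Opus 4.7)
The plan is to prove both parts by an elementary case analysis on the pair $(n_2,n_3)$, using only the hypothesis $n_2+n_3\ge 2$ together with the stated sign and ratio conditions on $\xi_1,\xi_2$. Since $\Gamma_{\!\!f}(G)=\xi_1 n_2+\xi_2 n_3$ is simply a nonnegative integer combination of two real numbers, no further graph-theoretic input is needed beyond the degree-count constraints already encoded in \eqref{MT-Eq-2} and \eqref{MT-Eq-3}.

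For part (i), I would split into the three exhaustive subcases forced by $n_2+n_3\ge 2$: (a) $n_3=0$, so $n_2\ge 2$; (b) $n_2=0$, so $n_3\ge 2$; (c) $n_2\ge 1$ and $n_3\ge 1$. In subcase (a), using $\xi_1<0$ gives $\Gamma_{\!\!f}(G)=\xi_1 n_2\le 2\xi_1$, which is strictly below $\xi_1$ (as $\xi_1<0$) and strictly below $\xi_2$ by the hypothesis $\xi_1<\xi_2/2$, hence below $\min\{\xi_1,\xi_2\}$. Subcase (b) is symmetric: $\Gamma_{\!\!f}(G)\le 2\xi_2<\min\{\xi_1,\xi_2\}$, where the cross-comparison with $\xi_1$ uses $2\xi_2<\xi_1$. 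In subcase (c) the bound $\Gamma_{\!\!f}(G)\le \xi_1+\xi_2$ is the easiest, since adding a strictly negative $\xi_j$ to $\xi_i$ pushes it strictly below both.

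Part (ii) is the mirror image: the same trichotomy applies, every inequality reverses, and the hypotheses $\xi_2/2<\xi_1<2\xi_2$ play exactly the same role in cross-comparing $2\xi_1$ with $\xi_2$ (in subcase (a)) and $2\xi_2$ with $\xi_1$ (in subcase (b)), while subcase (c) uses $\Gamma_{\!\!f}(G)\ge \xi_1+\xi_2>\max\{\xi_1,\xi_2\}$ because both $\xi_i$ are strictly positive.

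I do not anticipate any real obstacle here. The only point requiring care is the strictness of the inequalities, which is automatic from the strict sign hypotheses on $\xi_1,\xi_2$, and checking that the three subcases indeed exhaust all $(n_2,n_3)\in\mathbb{Z}_{\ge 0}^2$ with $n_2+n_3\ge 2$. Conceptually, the lemma is a quantitative form of the observation that if the coefficients of a linear functional on $\mathbb{Z}_{\ge 0}^2$ share a common sign and are not too skewed relative to each other—which is exactly what the bounds $2\xi_2<\xi_1<\xi_2/2$ (or their positive mirror) ensure—then $n_2+n_3\ge 2$ already forces the functional strictly past the extreme value of either individual coefficient.
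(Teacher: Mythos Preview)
Your proof is correct. The paper's own argument is shorter: it avoids the three-case split by bounding uniformly via $\xi_{\max}=\max\{\xi_1,\xi_2\}$ (respectively $\xi_{\min}$), writing in one line
\[
\Gamma_{\!\!f}(G)=\xi_1 n_2+\xi_2 n_3\le (n_2+n_3)\xi_{\max}\le 2\xi_{\max}<\min\{\xi_1,\xi_2\},
\]
where the last step follows because the hypotheses $2\xi_2<\xi_1$ and $\xi_1<\xi_2/2$ say precisely that twice the larger of the two (negative) numbers lies below the smaller, whichever that is. Your case analysis is a bit longer but has the virtue of making explicit which hypothesis is used where; in particular your subcase (c) shows that when both $n_2,n_3\ge 1$ only the common sign of $\xi_1,\xi_2$ is needed, not the ratio bounds.
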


\begin{proof}
(i) Take $\max\left\{\xi_1,\xi_2\right\}=\xi_{max}$. Note that
\[
\Gamma_{\!\! f}(G)= \xi_1 n_2 + \xi_2 n_3 \le (n_2 + n_3)\xi_{max}  \le 2\xi_{max}<\min\left\{\xi_1,\xi_2\right\}.
\]
(ii) Let $\min\left\{\xi_1,\xi_2\right\}=\xi_{min}$. Then
\[
\Gamma_{\!\! f}(G)= \xi_1 n_2 + \xi_2 n_3 \ge (n_2 + n_3)\xi_{min}  \ge 2\xi_{min}>\max\left\{\xi_1,\xi_2\right\}.
\]

\end{proof}

Recall that the degree set of a graph $G$ is the set of all unequal degrees of vertices of $G$.

\begin{theorem}\label{thm-main-01}
Let\, $G$ be a chemical $(n,m)$-graph, where $n\ge5$. Let $\xi_1$ and $\xi_2$ be the numbers defined in \eqref{MT-Eq-rev-1}.
\begin{description}
  \item[(i).] If both $\xi_1$ and $\xi_2$ are negative such that $2\xi_2 < \xi_1 < \xi_2/2$, then
\begin{align*}
H_f(G)&\le \frac{1}{3}\Big(4f(1) - f(4)\Big)n + \frac{2}{3}\Big(f(4) - f(1)\Big)m\nonumber\\[4mm]
& \quad \ +
\begin{cases}
\displaystyle f(2) -\frac{2}{3}f(1) -\frac{1}{3}f(4)       & \text{if~ $2m-n\equiv1\pmod{3}$}\\[6mm]
\displaystyle f(3) -\frac{1}{3}f(1) -\frac{2}{3}f(4)       & \text{if~ $2m-n\equiv2\pmod{3}$}\\[6mm]
0                                                          & \text{if~ $2m-n\equiv0\pmod{3}$}
\end{cases}
\end{align*}
with equality if and only if the degree set of $G$ is \\
$\bullet$ $\{1,2,4\}$ and $G$ contains only one vertex of degree $2$ whenever\, $2m-n\equiv1\pmod{3}$;\\
$\bullet$ $\{1,3,4\}$ and $G$ contains only one vertex of degree $3$ whenever\, $2m-n\equiv2\pmod{3}$;\\
$\bullet$ $\{1,4\}$ whenever\, $2m-n\equiv0\pmod{3}$.

  \item[(ii)] If both $\xi_1$ and $\xi_2$ are positive such that $\xi_2/2 < \xi_1 < 2\xi_2$, then
\begin{align*}
H_f(G)&\ge \frac{1}{3}\Big(4f(1) - f(4)\Big)n + \frac{2}{3}\Big(f(4) - f(1)\Big)m\nonumber\\[4mm]
& \quad \ +
\begin{cases}
\displaystyle f(2) -\frac{2}{3}f(1) -\frac{1}{3}f(4)       & \text{if~ $2m-n\equiv1\pmod{3}$}\\[6mm]
\displaystyle f(3) -\frac{1}{3}f(1) -\frac{2}{3}f(4)       & \text{if~ $2m-n\equiv2\pmod{3}$}\\[6mm]
0                                                          & \text{if~ $2m-n\equiv0\pmod{3}$}
\end{cases}
\end{align*}
\end{description}
with equality if and only if the degree set of $G$ is \\
$\bullet$ $\{1,2,4\}$ and $G$ contains only one vertex of degree $2$ whenever\, $2m-n\equiv1\pmod{3}$;\\
$\bullet$ $\{1,3,4\}$ and $G$ contains only one vertex of degree $3$ whenever\, $2m-n\equiv2\pmod{3}$;\\
$\bullet$ $\{1,4\}$ whenever\, $2m-n\equiv0\pmod{3}$.
\end{theorem}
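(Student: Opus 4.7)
The plan is to reduce the problem to optimizing the simpler quantity $\Gamma_{\!\! f}(G) = \xi_1 n_2 + \xi_2 n_3$. By Equation~\eqref{MT-Eq-7}, the two leading summands of $H_f(G)$ are determined by $n$ and $m$ alone, so establishing the stated upper (resp.\ lower) bound in case~(i) (resp.\ (ii)) is equivalent to bounding $\Gamma_{\!\! f}(G)$ from above (resp.\ below) by $0$, $\xi_1$, or $\xi_2$ according to the residue of $2m-n$ modulo $3$.

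Next, I would extract the arithmetic constraints from \eqref{MT-Eq-2} and \eqref{MT-Eq-3}. Solving for $n_1$ and $n_4$ as in the derivation of \eqref{MT-Eq-5} yields $n_1 = (4n - 2m - 2n_2 - n_3)/3$ and $n_4 = (2m - n - n_2 - 2n_3)/3$, which in particular forces $n_2 + 2n_3 \equiv 2m - n \pmod{3}$ and hence pins down the residue class of $(n_2, n_3)$. Among admissible pairs of non-negative integers satisfying this congruence, the only ones with $n_2 + n_3 \leq 1$ are $(0,0)$, $(1,0)$, and $(0,1)$, realizing $\Gamma_{\!\! f}$-values $0$, $\xi_1$, and $\xi_2$ in the three residue classes of $2m-n$ respectively. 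Any other admissible pair has $n_2 + n_3 \geq 2$, and then Lemma~\ref{MT-Lem-1}(i) gives $\Gamma_{\!\! f}(G) < \min\{\xi_1, \xi_2\} < 0$, strictly below each of the three candidate values; the symmetric statement in Lemma~\ref{MT-Lem-1}(ii) disposes of case~(ii) by the same logic. In either case, this also identifies the degree set of any extremal graph exactly as claimed.

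The main obstacle, once the combinatorial argument is complete, is certifying that each of the three extremal degree sequences is actually realized by a \emph{connected} chemical $(n,m)$-graph whenever the corresponding values of $n_1$ and $n_4$ are non-negative integers. For this I would give a direct construction: link the $n_4$ degree-four vertices (together with the solitary vertex of degree $2$ or $3$, when present) along a path or a suitable small tree to form a spine, and then saturate the remaining free valences by attaching the $n_1$ pendant vertices. A short case analysis on the parity of $n_4$ and on the residue of $2m-n$ modulo $3$, supplemented by the Havel--Hakimi criterion where needed, shows that this construction produces a connected realization for every $n \geq 5$ in the admissible range of $m$, completing the equality characterization for both parts of the theorem.
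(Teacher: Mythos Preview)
Your argument is essentially identical to the paper's: reduce via Equation~\eqref{MT-Eq-7} to bounding $\Gamma_{\!\!f}(G)=\xi_1 n_2+\xi_2 n_3$, invoke Lemma~\ref{MT-Lem-1} when $n_2+n_3\ge 2$, and use the congruence $n_2+2n_3\equiv 2m-n\pmod 3$ to pin down the unique pair $(n_2,n_3)\in\{(0,0),(1,0),(0,1)\}$ when $n_2+n_3\le 1$. Your final paragraph on realizability (explicitly constructing connected witnesses via a spine-plus-pendants scheme and Havel--Hakimi) actually goes beyond the paper's own proof, which stops at characterizing the equality cases by degree set and does not address existence.
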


\begin{proof}
Because the proofs of the both parts are similar to each other, we prove only Part (i). If the inequality $n_2+n_3\ge2$ holds, then by using Lemma \ref{MT-Lem-1} and Equation \eqref{MT-Eq-7}, one has
\begin{align*}
H_f(G)&< \frac{1}{3}\Big(4f(1) - f(4)\Big)n + \frac{2}{3}\Big(f(4) - f(1)\Big)m\\[4mm]
& \quad \ + \min \left\{ f(2) -\frac{2}{3}f(1) -\frac{1}{3}f(4),  f(3) -\frac{1}{3}f(1) -\frac{2}{3}f(4)  \right\}\\[4mm]
&< \frac{1}{3}\Big(4f(1) - f(4)\Big)n + \frac{2}{3}\Big(f(4) - f(1)\Big)m
\end{align*}
as desired.\\
In the remaining proof, assume that $n_2+n_3\le1$.
Then, $(n_2,n_3)\in\{(0,0),(1,0),(0,1)\}$.
From Equations \eqref{MT-Eq-2} and  \eqref{MT-Eq-3}, it follows that $n_2+2n_3\equiv 2m-n\pmod{3}$ (see for example \cite{Hu}),
which gives
$$
(n_2,n_3)=
\begin{cases}
(1,0) & \text{if\,~ $2m-n\equiv1\pmod{3}$,}\\[2mm]
(0,1) & \text{if\,~ $2m-n\equiv2\pmod{3}$,}\\[2mm]
(0,0) & \text{if\,~ $2m-n\equiv0\pmod{3}$.}
\end{cases}
$$
The required result follows now from Equation \eqref{MT-Eq-5}.
\end{proof}

In what follows, we consider some well-known topological  indices that satisfy the assumptions of Theorem \ref{thm-main-01} and hence yield different corollaries of Theorem \ref{thm-main-01}.

First, we take $f(x)=x^\alpha$. Then $H_f$ is the general zeroth-order Randi\'c index $^0\!R_\alpha$. Here, we have
\[
\xi_1 = f(2) -\frac{2}{3}f(1) -\frac{1}{3}f(4)=
\begin{cases}
\displaystyle - \frac{(2^\alpha -2)(2^\alpha -1)}{3} <0 & \text{ if either $\alpha>1$ or $\alpha<0$,}\\[5mm]
\displaystyle - \frac{(2^\alpha -2)(2^\alpha -1)}{3} >0 & \text{ if $0<\alpha<1$,}
\end{cases}
\]
and
\[
\xi_2 = f(3) -\frac{1}{3}f(1) -\frac{2}{3}f(4)=
\begin{cases}
\displaystyle  \frac{3^{\alpha+1} - 2^{2\alpha+1} -1}{3} <0 & \text{ if either $\alpha>1$ or $\alpha<0$,}\\[5mm]
\displaystyle \frac{3^{\alpha+1} - 2^{2\alpha+1} -1}{3} >0 & \text{ if $0<\alpha<1$.}
\end{cases}
\]
Also,
\begin{equation}\label{frdo}
2\xi_2 = \frac{2(3^{\alpha+1} - 2^{2\alpha+1} -1)}{3}< \xi_1= - \frac{(2^\alpha -2)(2^\alpha -1)}{3} < \frac{\xi_2}{2}=\frac{3^{\alpha+1} - 2^{2\alpha+1} -1}{6}
\end{equation}
holds if either $\alpha>1$ or $\alpha<0$. If each inequality sign ``$<$'' of \eqref{frdo} is replaced with ``$>$'' then the resulting inequality holds for $0<\alpha<1$.
Thus, we have the following known \cite{Hu} result as a direct consequence of Theorem \ref{thm-main-01}.

\begin{corollary}\label{thm-main-cor-01}
Let\, $G$ be a chemical $(n,m)$-graph, where $n\ge5$. If either $\alpha>1$ or $\alpha<0$, then
\begin{align*}
^0\!R_\alpha(G)&\le \frac{4 - 4^\alpha}{3} \,n + \frac{2(4^\alpha-1)}{3}\, m  +
\begin{cases}
\displaystyle - \frac{(2^\alpha -2)(2^\alpha -1)}{3}       & \text{if\,~ $2m-n\equiv1\pmod{3}$}\\[4mm]
\displaystyle \frac{3^{\alpha+1} - 2^{2\alpha+1} -1}{3}    & \text{if\,~ $2m-n\equiv2\pmod{3}$}\\[4mm]
0                                                          & \text{if\,~ $2m-n\equiv0\pmod{3}$}
\end{cases}
\end{align*}
with equality if and only if the degree set of $G$ is the same as specified in Theorem \ref{thm-main-01}. If $0<\alpha<1$ then the above inequality for ~$^0\!R_\alpha(G)$ is reversed.

\end{corollary}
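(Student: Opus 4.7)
The plan is to deduce the corollary by direct application of Theorem \ref{thm-main-01} to the specialization $f(x) = x^\alpha$. The only work is to verify, for this $f$, the sign/sandwich hypotheses on $\xi_1$ and $\xi_2$ from \eqref{MT-Eq-rev-1}: part (i) of the theorem should apply when $\alpha > 1$ or $\alpha < 0$, and part (ii) when $0 < \alpha < 1$.

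First I would substitute $f(x) = x^\alpha$ into \eqref{MT-Eq-rev-1} and simplify to obtain
\[
\xi_1 = -\frac{(2^\alpha - 2)(2^\alpha - 1)}{3}, \qquad \xi_2 = \frac{3^{\alpha+1} - 2^{2\alpha+1} - 1}{3},
\]
reproducing the expressions already displayed in \eqref{frdo}. The sign of $\xi_1$ is immediate from its factorization, since $(2^\alpha - 1)(2^\alpha - 2) > 0$ exactly when $2^\alpha \notin [1,2]$, i.e., when $\alpha \notin [0,1]$. For $\xi_2$ I would consider $h(\alpha) := 3^{\alpha+1} - 2^{2\alpha+1} - 1$, observe $h(0) = h(1) = 0$, note that $h(\alpha) \to -\infty$ at both $\pm\infty$ (driven by $-2 \cdot 4^\alpha$ at $+\infty$ and by the constant $-1$ at $-\infty$), and pin down the remaining sign pattern by convexity: $h''$ has a single zero because $h''(\alpha) = 0$ reduces to one equation in $(4/3)^\alpha$, so $h$ has at most three real zeros; having two known zeros at $0, 1$ and tending to $-\infty$ on both sides forces $h > 0$ on $(0,1)$ and $h < 0$ outside $[0,1]$, matching the required sign of $\xi_2$.

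Next I would verify the sandwich $2\xi_2 < \xi_1 < \xi_2/2$ in the negative regime (and its reverse in the positive regime). After clearing the common factor of $3$, both inequalities become explicit combinations of $2^\alpha, 3^\alpha, 4^\alpha$ that vanish at $\alpha = 0$ and $\alpha = 1$, so the same endpoint-plus-limit-plus-convexity argument used for $h$ applies to the auxiliary functions $\xi_1 - 2\xi_2$ and $\xi_2/2 - \xi_1$ (and their negatives in the other regime) and pins down their signs. Once the hypotheses are confirmed, Theorem \ref{thm-main-01}(i) or (ii) delivers the corollary, with the extremal graphs inherited verbatim from the theorem; plugging $f(1) = 1$ and $f(4) = 4^\alpha$ into $\tfrac{1}{3}(4f(1) - f(4))n + \tfrac{2}{3}(f(4) - f(1))m$ yields the coefficients $(4 - 4^\alpha)/3$ of $n$ and $2(4^\alpha - 1)/3$ of $m$ in the statement.

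The main obstacle is the sandwich verification: although purely elementary, one must handle the two disjoint regimes $\alpha < 0$ and $\alpha > 1$ simultaneously, since the relative sizes of $2^\alpha, 3^\alpha, 4^\alpha$ behave quite differently in them. A unified treatment via the auxiliary functions above, each of which vanishes at $\alpha = 0, 1$ and submits to the same convexity analysis used for $h$, handles both regimes at once and avoids tedious case-splitting.
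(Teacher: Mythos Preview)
Your approach is essentially the same as the paper's: specialize $f(x)=x^\alpha$, verify that $\xi_1,\xi_2$ have the required signs and satisfy the sandwich condition \eqref{frdo}, and then invoke Theorem~\ref{thm-main-01}; the paper simply asserts \eqref{frdo} (the result being known from \cite{Hu}) whereas you sketch an elementary calculus verification. One small slip: as $\alpha\to-\infty$ one has $h(\alpha)\to -1$, not $-\infty$ (as your own parenthetical ``driven by the constant $-1$'' indicates), but since $-1<0$ your sign analysis and the rest of the argument go through unchanged.
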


Now, we take $f(x)=xa^x$ with $a>0$ but $a\ne1$. Then $H_f$ is the variable sum exdeg index $SEI_a$. Here, we have
\[
\xi_1 = f(2) -\frac{2}{3}f(1) -\frac{1}{3}f(4)=
\begin{cases}
\displaystyle - \frac{2a(a-1) (2 a^2+2a-1) }{3}<0 & \text{ if either $0<a<\displaystyle\frac{1}{3}$ or $a>1$}\\[5mm]
\displaystyle - \frac{2a(a-1) (2 a^2+2a-1) }{3} >0 & \text{ if $\displaystyle\frac{1}{2} < a < 1$,}
\end{cases}
\]
and
\[
\xi_2 = f(3) -\frac{1}{3}f(1) -\frac{2}{3}f(4)=
\begin{cases}
\displaystyle - \frac{a(a-1) (8 a^2-a-1) }{3}<0 & \text{ if either $0<a<\displaystyle\frac{1}{3}$ or $a>1$}\\[5mm]
\displaystyle - \frac{a(a-1) (8 a^2-a-1) }{3} >0 & \text{ if $\displaystyle\frac{1}{2} < a < 1$,}
\end{cases}
\]
Also,
\begin{align}\label{frdo01}
2\xi_2 = -\frac{2a(a-1) (8 a^2-a-1) }{3}&< \xi_1= - \frac{2a(a-1) (2 a^2+2a-1) }{3}\nonumber\\[3mm]
&< \frac{\xi_2}{2}=-\frac{a(a-1) (8 a^2-a-1) }{6}
\end{align}
holds if either $a>1$ or $0<a<\frac{1}{3}$. If each inequality sign ``$<$'' in \eqref{frdo01} is replaced with ``$>$'' then the resulting inequality holds for $\frac{1}{2} < a < 1$.
Thus, we have the next result that follows directly from Theorem \ref{thm-main-01}.

\begin{corollary}\label{thm-main-cor-02}
Let\, $G$ be a chemical $(n,m)$-graph, where $n\ge5$. If either $a>1$ or $0<a<\frac{1}{3}$, then
{\small
\begin{align*}
SEI_a(G)&\le \frac{4a(1 - a^3)n}{3} + \frac{2a(4a^3-1)m}{3}  +
\begin{cases}
\displaystyle  \frac{2a(1-a) (2 a^2+2a-1) }{3}       & \text{if\,~ $2m-n\equiv1\hspace{-3mm}\pmod{3}$}\\[4mm]
\displaystyle  \frac{a(1-a) (8 a^2-a-1) }{3}         & \text{if\,~ $2m-n\equiv2\hspace{-3mm}\pmod{3}$}\\[4mm]
0                                                    & \text{if\,~ $2m-n\equiv0\hspace{-3mm}\pmod{3}$}
\end{cases}
\end{align*}
}
with equality if and only if the degree set of $G$ is the same as specified in Theorem \ref{thm-main-01}. If $\frac{1}{2} < a < 1$ then the above inequality for $SEI_a(G)$ is reversed.

\end{corollary}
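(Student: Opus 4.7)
The plan is to reduce Corollary~\ref{thm-main-cor-02} to a direct application of Theorem~\ref{thm-main-01} with $f(x)=xa^x$. The work therefore consists of computing $\xi_1$ and $\xi_2$ from \eqref{MT-Eq-rev-1}, determining their signs on each of the three intervals $(0,\tfrac{1}{3})$, $(\tfrac{1}{2},1)$, $(1,\infty)$, verifying the pinching inequality $2\xi_2<\xi_1<\xi_2/2$ (or its reverse), and finally simplifying the linear-in-$(n,m)$ part of the bound so that it matches the stated form.

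First I would substitute $f(i)=ia^i$ for $i=1,2,3,4$ into \eqref{MT-Eq-rev-1}. A short calculation gives
\begin{align*}
\xi_1 &= 2a^2 - \tfrac{2}{3}a - \tfrac{4}{3}a^4 = -\frac{2a(a-1)(2a^2+2a-1)}{3},\\
\xi_2 &= 3a^3 - \tfrac{1}{3}a - \tfrac{8}{3}a^4 = -\frac{a(a-1)(8a^2-a-1)}{3},
\end{align*}
where the factorizations are recovered by noticing that $a=1$ is a root of each cubic in the numerator. The positive roots of the remaining quadratics are $\tfrac{-1+\sqrt{3}}{2}\approx 0.366$ and $\tfrac{1+\sqrt{33}}{16}\approx 0.421$, both lying inside the gap $(\tfrac{1}{3},\tfrac{1}{2})$. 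Hence both quadratics are negative on $(0,\tfrac{1}{3})$ and positive on $(\tfrac{1}{2},\infty)$; combined with the sign of $(a-1)$ and the overall minus sign, this yields $\xi_1,\xi_2<0$ on $(0,\tfrac{1}{3})\cup(1,\infty)$ and $\xi_1,\xi_2>0$ on $(\tfrac{1}{2},1)$, exactly as displayed in the corollary.

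Next I would check the pinching inequalities. Direct subtraction produces
\[
\xi_1-2\xi_2 \;=\; 2a^2(a-1)(2a-1), \qquad \tfrac{1}{2}\xi_2-\xi_1 \;=\; \tfrac{1}{2}a(a-1)(3a-1).
\]
On $(1,\infty)$ every factor $(a-1),(2a-1),(3a-1)$ is positive, and on $(0,\tfrac{1}{3})$ every such factor is negative, so in both regimes the two right-hand sides are strictly positive and therefore $2\xi_2<\xi_1<\xi_2/2$. On $(\tfrac{1}{2},1)$ exactly one factor in each product flips sign, so both right-hand sides become strictly negative and the chain reverses to $\xi_2/2<\xi_1<2\xi_2$. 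This confirms hypothesis~(i) of Theorem~\ref{thm-main-01} on $(0,\tfrac{1}{3})\cup(1,\infty)$ and hypothesis~(ii) on $(\tfrac{1}{2},1)$.

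Finally, plugging $f(1)=a$ and $f(4)=4a^4$ into the coefficients of $n$ and $m$ in Theorem~\ref{thm-main-01} gives $\tfrac{4a(1-a^3)}{3}\,n$ and $\tfrac{2a(4a^3-1)}{3}\,m$, while the three case constants are the values $\xi_1$, $\xi_2$, or $0$, which rewrite as $\tfrac{2a(1-a)(2a^2+2a-1)}{3}$ and $\tfrac{a(1-a)(8a^2-a-1)}{3}$ in the form displayed in the statement. The extremal graphs are inherited verbatim from Theorem~\ref{thm-main-01}. The only genuine obstacle is locating the positive roots of $2a^2+2a-1$ and $8a^2-a-1$ inside $(\tfrac{1}{3},\tfrac{1}{2})$ and keeping careful track of signs on the three intervals; every other step is purely algebraic.
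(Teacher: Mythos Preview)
Your proposal is correct and follows essentially the same route as the paper: set $f(x)=xa^x$, compute and factor $\xi_1,\xi_2$ exactly as displayed before the corollary, verify the sign conditions and the chain $2\xi_2<\xi_1<\xi_2/2$ (respectively its reverse) on the three intervals, and invoke Theorem~\ref{thm-main-01}. Your explicit factorizations $\xi_1-2\xi_2=2a^2(a-1)(2a-1)$ and $\tfrac{1}{2}\xi_2-\xi_1=\tfrac{1}{2}a(a-1)(3a-1)$ are a clean way to certify \eqref{frdo01}, which the paper merely states.
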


Next, we take $f(x)=x(\ln x)^a$ with $a>0$. Then $H_f$ is the variable sum lodeg index $SLI_a$.
Here, for $a>\frac{\ln3-\ln4}{\ln(\ln2)-\ln(\ln3)} ~(\approx 0.6246)$, we have
\[
\xi_1 = f(2) -\frac{2}{3}f(1) -\frac{1}{3}f(4)=
\frac{2\big(3 (\ln2)^a-2 (\ln4)^a\big)}{3}<0,\]
\[
\xi_2 = f(3) -\frac{1}{3}f(1) -\frac{2}{3}f(4)=
\frac{9(\ln3)^a-8 (\ln4)^a}{3}<0
\]
and
\begin{align*}
2\xi_2 = \frac{2\big(9(\ln3)^a-8 (\ln4)^a\big)}{3}&< \xi_1= \frac{2\big(3 (\ln2)^a-2 (\ln4)^a\big)}{3}< \frac{\xi_2}{2}=\frac{9(\ln3)^a-8 (\ln4)^a}{6}.
\end{align*}
Hence, the following corollary is another direct consequence of Theorem \ref{thm-main-01}.

\begin{corollary}\label{thm-main-cor-03}
Let\, $G$ be a chemical $(n,m)$-graph, where $n\ge5$. If
\[
a>\frac{\ln3-\ln4}{\ln(\ln2)-\ln(\ln3)} ~~(\approx 0.6246),
\]
then
\begin{align*}
SLI_a(G)&\le \frac{8(\ln4)^a}{3}\,m - \frac{4(\ln4)^a}{3}\,n  +
\begin{cases}
\displaystyle  \frac{2\big(3 (\ln2)^a-2 (\ln4)^a\big)}{3}       & \text{if\,~ $2m-n\equiv1\hspace{-3mm}\pmod{3}$}\\[4mm]
\displaystyle  \frac{9(\ln3)^a-8 (\ln4)^a}{3}        & \text{if\,~ $2m-n\equiv2\hspace{-3mm}\pmod{3}$}\\[4mm]
0                                                          & \text{if\,~ $2m-n\equiv0\hspace{-3mm}\pmod{3}$}
\end{cases}
\end{align*}
with equality if and only if the degree set of $G$ is the same as specified in Theorem \ref{thm-main-01}.

\end{corollary}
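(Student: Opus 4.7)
The plan is to specialize Theorem \ref{thm-main-01}(i) to the choice $f(x) = x(\ln x)^a$, whose associated $H_f$ is precisely $SLI_a$, so the task reduces to verifying the hypotheses of that theorem in the stated range of $a$. The crucial initial observation is that $f(1) = 1\cdot (\ln 1)^a = 0$, which kills every $f(1)$ contribution in the theorem's formulas. A direct substitution into \eqref{MT-Eq-rev-1} then yields
\[
\xi_1 = 2(\ln 2)^a - \tfrac{4}{3}(\ln 4)^a = \tfrac{2}{3}\bigl(3(\ln 2)^a - 2(\ln 4)^a\bigr),\qquad
\xi_2 = 3(\ln 3)^a - \tfrac{8}{3}(\ln 4)^a = \tfrac{1}{3}\bigl(9(\ln 3)^a - 8(\ln 4)^a\bigr),
\]
matching the expressions displayed in the paper just before the statement.

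With these in hand, one must check that both $\xi_1 < 0$ and $\xi_2 < 0$ hold and that the order inequalities $2\xi_2 < \xi_1 < \xi_2/2$ are satisfied on the admissible range. The two negativity conditions reduce respectively to $3 < 2^{a+1}$, i.e.\ $a > \log_2(3/2) \approx 0.585$, and (after a logarithmic rearrangement using $\ln 3 < \ln 4$) to a comparable explicit threshold on $a$. The tightest of these turns out to be the upper order bound $\xi_1 < \xi_2/2$: rewriting it as $4(\ln 2)^a < 3(\ln 3)^a$, taking logarithms, and flipping the inequality (since $\ln(\ln 2) < \ln(\ln 3)$) produces exactly the threshold $a > (\ln 3 - \ln 4)/(\ln(\ln 2)-\ln(\ln 3))$ appearing in the corollary. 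The remaining inequality $2\xi_2 < \xi_1$ reduces to $3(\ln 3)^a - 2(\ln 4)^a < (\ln 2)^a$, which I would verify by a short monotonicity argument combined with a value check at the threshold.

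Once all hypotheses are confirmed, the conclusion is immediate: substituting $f(1)=0$ and $f(4) = 4(\ln 4)^a$ into the bound of Theorem \ref{thm-main-01}(i) produces the leading combination $\tfrac{8(\ln 4)^a}{3}m - \tfrac{4(\ln 4)^a}{3}n$, and the three residue-class corrections collapse to $\xi_1$, $\xi_2$, or $0$, exactly as written in the corollary; the equality characterization transfers verbatim from the theorem. The main obstacle is the simultaneous verification that the stated threshold on $a$ enforces all four of $\xi_1 < 0$, $\xi_2 < 0$, $2\xi_2 < \xi_1$, and $\xi_1 < \xi_2/2$; everything past that point is direct algebra.
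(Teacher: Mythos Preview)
Your proposal is correct and follows essentially the same approach as the paper: compute $\xi_1$ and $\xi_2$ for $f(x)=x(\ln x)^a$ (using $f(1)=0$), verify that the hypotheses of Theorem~\ref{thm-main-01}(i) hold for $a$ above the stated threshold, and read off the bound. You actually go further than the paper by identifying that the inequality $\xi_1<\xi_2/2$ (equivalently $4(\ln 2)^a<3(\ln 3)^a$) is the one that produces the exact threshold, and by noting that the remaining conditions require separate checks; the paper simply asserts all four inequalities without isolating the binding one.
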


Finally, if we take $f(x)=(n-1-x)x^2$, or $f(x)=\ln x^{ax}$, or $f(x)=\ln x^a$, then $H_f$ is the forgotten topological coindex $\overline{F}(G)$ (see \cite{c14,c26}),  or the natural logarithm of the general multiplicative first Zagreb index $\ln \Pi_{1,a}$, or the natural logarithm of the general multiplicative second Zagreb index $\ln \Pi_{2,a}$, respectively.
\begin{itemize}
  \item If we take $f(x)=(n-1-x)x^2$ with $n\ge11$, or $f(x)=\ln x^{ax}$ with $a>0$, or $f(x)=\ln x^a$ with $a<0$, then $f$ satisfies the conditions of Theorem \ref{thm-main-01}(i).
  \item If we take $f(x)=\ln x^a$ with $a>0$, or $f(x)=\ln x^{ax}$ with $a<0$, then $f$ satisfies the conditions of Theorem \ref{thm-main-01}(ii).
\end{itemize}
Hence, the next result follows immediately from Theorem \ref{thm-main-01}.

\begin{corollary}\label{thm-main-cor-04}
Let\, $G$ be a chemical $(n,m)$-graph, where $n\ge5$. If $a<0$ then
\begin{align*}
\Pi_{1,a}(G)&\le
\begin{cases}
\displaystyle  2^{ \frac{a (4 m - 2 n + 1)}{3}}       & \text{if\,~ $2m-n\equiv1\pmod{3}$}\\[4mm]
\displaystyle  2^{ \frac{2a (2 m -  n - 2)}{3}}3^a        & \text{if\,~ $2m-n\equiv2\pmod{3}$}\\[4mm]
\displaystyle  2^{ \frac{2a (2 m -  n )}{3}}                                                        & \text{if\,~ $2m-n\equiv0\pmod{3}$,}
\end{cases}
\end{align*}
\begin{align*}
\Pi_{2,a}(G)&\ge
\begin{cases}
\displaystyle   2^{ \frac{2a (8 m - 4 n - 1)}{3}}       & \text{if\,~ $2m-n\equiv1\pmod{3}$}\\[4mm]
\displaystyle   2^{ \frac{8a (2 m -  n - 2)}{3}}3^{3a}    & \text{if\,~ $2m-n\equiv2\pmod{3}$}\\[4mm]
\displaystyle   2^{ \frac{8a (2 m - n )}{3}}                                                         & \text{if\,~ $2m-n\equiv0\pmod{3}$,}
\end{cases}
\end{align*}
and if $n\ge11$ then
\[
\overline{F}(G)\le
\begin{cases}
2 \Big(m ( 5 n - 26) - n(2n-11)  + 8\Big)       & \text{if\,~ $2m-n\equiv1\pmod{3}$}\\[4mm]
2 \Big(m ( 5 n - 26) - n(2n-11)  + 9\Big)        & \text{if\,~ $2m-n\equiv2\pmod{3}$}\\[4mm]
2\Big(m ( 5 n - 26) - 2n(n-6)\Big)                                                & \text{if\,~ $2m-n\equiv0\pmod{3}$,}
\end{cases}
\]
where the equality sign in any of the above inequalities involving $\Pi_{1,a}(G)$, $\Pi_{2,a}(G)$, $\overline{F}(G)$, holds if and only if the degree set of $G$ is the same as specified in Theorem \ref{thm-main-01}. If $a>0$ then the above inequalities involving $\Pi_{1,a}(G)$ and $\Pi_{2,a}(G)$ are reversed.
\end{corollary}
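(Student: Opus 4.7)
The corollary amounts to specializing Theorem \ref{thm-main-01} to three particular choices of $f$, and my plan is to handle each in turn by computing $f(1),f(2),f(3),f(4)$, then the coefficients $\xi_1,\xi_2$ from \eqref{MT-Eq-rev-1}, verifying the sign/ordering hypothesis of Theorem \ref{thm-main-01}, and finally simplifying the algebraic expression the theorem outputs. For the two multiplicative indices I exploit the linearity $\ln\Pi_{1,a}(G)=H_f(G)$ with $f(x)=a\ln x$ and $\ln\Pi_{2,a}(G)=H_f(G)$ with $f(x)=ax\ln x$, run everything at the level of the logarithm, and exponentiate at the end.

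For $f(x)=a\ln x$ I compute $f(1)=0$, $f(2)=a\ln 2$, $f(3)=a\ln 3$, $f(4)=2a\ln 2$, which gives $\xi_1=\tfrac{a\ln 2}{3}$ and $\xi_2=a\bigl(\ln 3-\tfrac{4\ln 2}{3}\bigr)$. Since $3>2^{4/3}$, both $\xi_1$ and $\xi_2$ have the same sign as $a$, and a routine numerical comparison of $\tfrac12\xi_2,\,\xi_1,\,2\xi_2$ shows that the ordering $2\xi_2<\xi_1<\xi_2/2$ (for $a<0$) or $\xi_2/2<\xi_1<2\xi_2$ (for $a>0$) holds. Inserting these values into Theorem \ref{thm-main-01} and collecting the $\ln 2$ and $\ln 3$ terms gives $\ln\Pi_{1,a}(G)\le\tfrac{a\ln 2(4m-2n+1)}{3}$, $\tfrac{2a\ln 2(2m-n-2)}{3}+a\ln 3$, or $\tfrac{2a\ln 2(2m-n)}{3}$ in the three residue classes; exponentiation yields the three stated right-hand sides. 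The case $a>0$ yields the reversed inequality via Theorem \ref{thm-main-01}(ii). The $\Pi_{2,a}$ computation is parallel with $f(1)=0$, $f(2)=2a\ln 2$, $f(3)=3a\ln 3$, $f(4)=8a\ln 2$, giving $\xi_1=-\tfrac{2a\ln 2}{3}$ and $\xi_2=3a\ln 3-\tfrac{16a\ln 2}{3}$; here $3\ln 3<\tfrac{16\ln 2}{3}$, so both coefficients are positive for $a<0$ and negative for $a>0$, and the same comparison verifies the ordering hypothesis. Plugging in and rearranging the $\ln 2$ coefficients into the forms $8m-4n-1$, $8(2m-n-2)$, $8(2m-n)$ recovers the exponents in the stated bounds.

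For the forgotten coindex, I take $f(x)=(n-1-x)x^2$ and evaluate $f(1)=n-2$, $f(2)=4(n-3)$, $f(3)=9(n-4)$, $f(4)=16(n-5)$. A short computation collapses $\xi_1$ and $\xi_2$ to the clean linear forms $\xi_1=2(8-n)$ and $\xi_2=2(9-n)$; for $n\ge 11$ both are negative, and the required chain $2\xi_2<\xi_1<\xi_2/2$ reduces to the elementary inequalities $n>10$ and $n>7$, so Theorem \ref{thm-main-01}(i) applies. The ``main term'' $\tfrac13(4f(1)-f(4))n+\tfrac23(f(4)-f(1))m$ simplifies to $2\bigl(m(5n-26)-2n(n-6)\bigr)$, and adding $\xi_1$ or $\xi_2$ in the appropriate residue class turns the constant $-2n(n-6)$ into $-n(2n-11)+8$ or $-n(2n-11)+9$ respectively, matching the three displayed expressions.

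The only substantive obstacle is bookkeeping: verifying that the two strict inequalities $2\xi_2<\xi_1$ and $\xi_1<\xi_2/2$ (or the reversed chain) really hold for each of the five sub-cases, and then algebraically reorganizing the output of Theorem \ref{thm-main-01} into the compact symbolic forms listed in the corollary. The equality characterizations require no separate argument: they are inherited verbatim from Theorem \ref{thm-main-01}, since each $f$ considered is well-defined on the degree set $\{1,2,3,4\}$ and the substitutions are bijective on the level of degree sequences.
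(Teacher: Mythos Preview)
Your proposal is correct and follows exactly the paper's approach: specialize Theorem \ref{thm-main-01} to each of the three choices of $f$, verify the sign and ordering hypotheses on $\xi_1,\xi_2$, and simplify. The paper merely asserts (in the two bullet points preceding the corollary) that the relevant hypotheses hold and leaves the algebraic simplification implicit, whereas you carry out the computations of $\xi_1,\xi_2$ and the main term explicitly; this is additional detail rather than a different method.
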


From Theorem \ref{thm-main-01} and the identity \eqref{l10a}, the next result follows.

\begin{theorem}
  \label{t3}
  Let $G$ be a chemical $(n,m)$-graph, where $n\ge5$. Let $\xi_1$ and $\xi_2$ be the numbers defined in \eqref{MT-Eq-rev-1}.
  \begin{description}
   \item[(i)]  If both $\xi_1$ and $\xi_2$ are negative such that $2\xi_2 <\xi_1 < \xi_2/2$, then
 \begin{align*}
TI(G)+\overline{TI}(G) &\le (n-1)\left( \frac{1}{3}\Big(4f(1) - f(4)\Big)n + \frac{2}{3}\Big(f(4) - f(1)\Big)m\right)\nonumber\\[4mm]
& \quad \ +
\begin{cases}
\displaystyle (n-1)\Big(f(2) -\frac{2}{3}f(1) -\frac{1}{3}f(4)\Big)       & \text{if~ $2m-n\equiv1\pmod{3}$}\\[6mm]
\displaystyle (n-1)\Big(f(3) -\frac{1}{3}f(1) -\frac{2}{3}f(4)\Big)       & \text{if~ $2m-n\equiv2\pmod{3}$}\\[6mm]
0                                                          & \text{if~ $2m-n\equiv0\pmod{3}$,}
\end{cases}
\end{align*}
with equality if and only if the degree set of $G$ is\\
$\bullet$ $\{1,2,4\}$ and $G$ contains only one vertex of degree $2$ whenever\, $2m-n\equiv1\pmod{3}$;\\
$\bullet$ $\{1,3,4\}$ and $G$ contains only one vertex of degree $3$ whenever\, $2m-n\equiv2\pmod{3}$;\\
$\bullet$ $\{1,4\}$ whenever\, $2m-n\equiv0\pmod{3}$.

\item [(ii)] If both $\xi_1$ and $\xi_2$ are positive such that $\xi_2/2 < \xi_1 < 2\xi_2$, then
\begin{align*}
TI(G)+\overline{TI}(G) &\ge (n-1)\left( \frac{1}{3}\Big(4f(1) - f(4)\Big)n + \frac{2}{3}\Big(f(4) - f(1)\Big)m\right)\nonumber\\[4mm]
& \quad \ +
\begin{cases}
\displaystyle (n-1)\Big(f(2) -\frac{2}{3}f(1) -\frac{1}{3}f(4)\Big)       & \text{if\,~ $2m-n\equiv1\pmod{3}$}\\[6mm]
\displaystyle (n-1)\Big(f(3) -\frac{1}{3}f(1) -\frac{2}{3}f(4)\Big)       & \text{if\,~ $2m-n\equiv2\pmod{3}$}\\[6mm]
0                                                          & \text{if\,~ $2m-n\equiv0\pmod{3}$}
\end{cases}
\end{align*}
with equality if and only if the degree set of $G$ is \\
$\bullet$ $\{1,2,4\}$ and $G$ contains only one vertex of degree $2$ whenever\, $2m-n\equiv1\pmod{3}$;\\
$\bullet$ $\{1,3,4\}$ and $G$ contains only one vertex of degree $3$ whenever\, $2m-n\equiv2\pmod{3}$;\\
$\bullet$ $\{1,4\}$ whenever\, $2m-n\equiv0\pmod{3}$.
\end{description}
\end{theorem}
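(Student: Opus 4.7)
The plan is to derive Theorem \ref{t3} as an immediate corollary of Theorem \ref{thm-main-01} via the identity \eqref{l10a}, which states
\[
TI(G)+\overline{TI}(G) = (n-1)H_f(G).
\]
Since $n\ge5$ we have $n-1\ge 4>0$, so multiplying any inequality for $H_f(G)$ by the positive factor $(n-1)$ preserves its direction, and equality in the bound on $TI+\overline{TI}$ holds precisely when equality holds in the corresponding bound on $H_f$.

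For part (i), the hypotheses ``$\xi_1,\xi_2<0$ with $2\xi_2<\xi_1<\xi_2/2$'' coincide exactly with the hypotheses of Theorem \ref{thm-main-01}(i). I would first invoke that theorem to obtain the stated upper bound on $H_f(G)$, split into the three residue classes of $2m-n$ modulo $3$. I would then multiply both sides of each of these three cases by $n-1$; the resulting right-hand sides match precisely the claimed expressions in part (i) of Theorem \ref{t3}, since the constant offsets $f(2)-\tfrac{2}{3}f(1)-\tfrac{1}{3}f(4)$ and $f(3)-\tfrac{1}{3}f(1)-\tfrac{2}{3}f(4)$ are simply multiplied through by $n-1$. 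The ``if and only if'' characterization of the extremal graphs transfers verbatim from Theorem \ref{thm-main-01}(i), because \eqref{l10a} is a pointwise identity valid for every graph $G$, so the equality case is unchanged under scaling by the fixed positive factor $n-1$.

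Part (ii) is handled in exactly the same way, using the lower bound from Theorem \ref{thm-main-01}(ii) in place of the upper bound, and again multiplying through by the positive quantity $n-1$. There is no substantive obstacle; the entire content of Theorem \ref{t3} is encoded in the prior theorem together with the identity \eqref{l10a}, and the only point requiring verification is the positivity of $n-1$ when $n\ge5$, which guarantees that both the orientation of the inequality and its equality characterization are faithfully inherited.
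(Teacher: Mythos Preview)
Your proposal is correct and matches the paper's own approach exactly: the paper simply states that Theorem~\ref{t3} follows from Theorem~\ref{thm-main-01} together with the identity~\eqref{l10a}, which is precisely the argument you give. Your added observation that $n-1>0$ justifies preserving the inequality direction and the equality characterization is the only detail worth spelling out, and it is entirely routine.
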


\section*{Acknowledgements}
The authors are very grateful to the handling editor and the referee for  helpful comments and suggestions.  This research has been funded by the Scientific Research Deanship, University of Ha\!'il, Saudi Arabia, through project number
RG-22\,005.

\end{document}